\documentclass[twoside,12pt]{article}
\usepackage[T1]{fontenc}
\usepackage[utf8]{inputenc}
\usepackage{mathrsfs}
\usepackage{amssymb,amsmath,amsthm,mathtools}
\usepackage{graphicx}
\usepackage{color}
\usepackage[top=2cm,bottom=2cm,left=2cm,right=2cm]{geometry}
\usepackage{float,caption}
\usepackage{microtype}

\usepackage[colorlinks=true,
            linkcolor=blue,
            citecolor=blue,
            urlcolor=blue]{hyperref}

\input{epsf}

\newcommand{\bd}{\begin{description}}
\newcommand{\ed}{\end{description}}
\newcommand{\bi}{\begin{itemize}}
\newcommand{\ei}{\end{itemize}}
\newcommand{\be}{\begin{enumerate}}
\newcommand{\ee}{\end{enumerate}}
\newcommand{\beq}{\begin{equation}}
\newcommand{\eeq}{\end{equation}}
\newcommand{\beqs}{\begin{eqnarray*}}
\newcommand{\eeqs}{\end{eqnarray*}}

\definecolor{DarkGreen}{rgb}{0.2,0.6,0.3}

\newtheorem{theorem}{Theorem}[section]
\newtheorem{conjecture}[theorem]{Conjecture}

\newtheorem{lemma}[theorem]{Lemma}

\newtheorem{case}{Case}

\newtheorem{remark}{Remark}[section]

\newtheorem{proposition}[theorem]{Proposition}

\allowdisplaybreaks
\newcommand{\inv}{\operatorname{inv}}
\newcommand{\FF}{\mathbb{F}}
\newcommand{\eps}{\varepsilon}
\newcommand{\one}{\mathbf{1}}
\newcommand{\rk}{\operatorname{rank}}
\newcommand{\calH}{\mathcal{H}}

\title{\textbf{The inversion number of a path-reversed tournament: Resolving a conjecture of Belkhechine, Bouaziz, Boudabbous, and Pouzet}}

\author{Yaping Mao\footnote{Corresponding author. Academy of Plateau Science and Sustainability and School of Mathematics and Statistics, Qinghai Normal University, Xining, Qinghai 810008, China. {\tt yapingmao@outlook.com; myp@qhnu.edu.cn}}}
\date{}

\begin{document}

\maketitle

\begin{abstract}
Let $D$ be a tournament and let $X\subseteq V(D)$. The inversion of $X$ reverses all arcs whose
both endpoints lie in $X$ and leaves every other arc unchanged. A family of inversions is a
decycling family if applying all of them produces an acyclic, equivalently transitive,
tournament. The inversion number $\inv(D)$ is the minimum size of such a family.
Let $Q_n$ be the
tournament on $[n]$ obtained from the natural transitive tournament by reversing precisely the
consecutive pairs $12,23,\ldots,(n-1)n$. 
Belkhechine, Bouaziz, Boudabbous, and Pouzet conjectured in their
unpublished manuscript that a natural path-reversed family has
inversion number exactly $\left\lfloor(n-1)/2\right\rfloor$. The same problem was later recorded by
Bang-Jensen, da Silva, and Havet and by Alon, Powierski, Savery, Scott, and Wilmer.
In this paper we resolve this conjecture.\\[0.2cm]
\textbf{2020 Mathematics Subject Classification.} 05C20; 05C35.\\[0.2cm]
\textbf{Keywords.} Tournament; Inversion number; Transitive tournament; Decycling family; Finite fields.
\end{abstract}

\section{Introduction}

A \emph{tournament} is an
orientation of a complete graph. A tournament is \emph{transitive} if its vertices admit a linear
order in which every arc points forward. Equivalently, a tournament is transitive if and only if it
is acyclic. We write $ij$ for the unordered pair $\{i,j\}$; its orientation is one of $i\to j$ and
$j\to i$. For a statement $P$, the symbol $\one_P$ denotes its indicator, equal to $1$ if $P$ holds
and $0$ otherwise.

Let $D$ be a tournament and let $X\subseteq V(D)$. The \emph{inversion} of $X$ reverses all arcs whose
both endpoints lie in $X$ and leaves every other arc unchanged. A family of inversions is a
\emph{decycling family} if applying all of them produces an acyclic, equivalently transitive,
tournament. The \emph{inversion number} $\inv(D)$ is the minimum size of such a family.

Inversions of vertex sets provide a natural way to measure how far an oriented graph is from being
acyclic. For tournaments, acyclic is equivalent
to transitive, so the parameter measures the number of induced subtournament reversals needed to
produce a linear order.

The study of inversions goes back to Belkhechine's thesis and subsequent work of Belkhechine,
Bouaziz, Boudabbous, and Pouzet \cite{BelkhechineThesis,BBBPUnpublished,BelkhechineEtAl}.  A systematic
modern treatment for oriented graphs was given by Bang-Jensen, da Silva, and Havet
\cite{BangJensenDaSilvaHavet}, who related inversion number to cycle transversals and cycle
arc-transversals, studied complexity questions, and posed several structural problems.  Alon,
Powierski, Savery, Scott, and Wilmer \cite{APSSW} developed the tournament side further; among other
results, they proved fixed-parameter tractability for deciding whether a tournament has inversion number
at most $k$, proved NP-completeness for the corresponding problem on general oriented graphs, and
determined the asymptotic maximum inversion number of an $n$-vertex tournament to be $(1+o(1))n$.
These works show that the global theory is now quite well developed, while exact values for concrete
structured tournaments remain delicate.

Several recent lines of work have expanded the subject.  Aubian, Havet, H\"orsch, Klingelhoefer, Nisse,
Rambaud, and Vermande \cite{AubianEtAl} sharpened general extremal bounds and produced counterexamples
to natural dijoin additivity conjectures.  Behague, Johnston, Morrison, and Ogden \cite{BehagueEtAl}
connected inversion number with subgraph complementation, giving a useful linear-algebraic viewpoint.
Yuster \cite{Yuster} studied restricted inversions in which only sets of bounded size may be inverted.
Duron, Havet, H\"orsch, and Rambaud \cite{DuronEtAl} considered variants where the target is high
arc-connectivity or strong connectivity rather than acyclicity.  More recently, Ai \cite{Ai} initiated
a probabilistic direction by studying the inversion walk on labelled tournaments.

Let $Q_n$ be obtained from the natural transitive tournament on $[n]=\{1,\ldots,n\}$ by reversing exactly
the consecutive pairs
  $12,23,\ldots,(n-1)n$.
Equivalently, for $i<j$ we orient
\[
  i\to j \quad\text{if } j\ge i+2,
  \qquad
  i+1\to i \quad\text{if } j=i+1.
\]
Thus all nonconsecutive pairs keep their natural orientation, while each consecutive pair is oriented
backwards. In 2012, Belkhechine, Bouaziz, Boudabbous, and Pouzet conjectured in their unpublished manuscript that
\begin{conjecture}[Belkhechine, Bouaziz, Boudabbous, and Pouzet, \cite{BBBPUnpublished}]
\[
  \inv(Q_n)=\left\lfloor\frac{n-1}{2}\right\rfloor .
\]
\end{conjecture}
It was later
recorded by Bang-Jensen, da Silva, and Havet as Conjecture~1.11 and by Alon, Powierski, Savery,
Scott, and Wilmer as Conjecture~8.6 \cite[Conjecture~1.11]{BangJensenDaSilvaHavet} and \cite[Conjecture~8.6]{APSSW}.
The latter paper also notes that the conjecture was known for $n\le8$ and that the natural matching
construction gives the upper bound \cite[Conjecture~8.6]{APSSW}.  

Our main result proves that this
upper bound is always optimal.

\begin{theorem}\label{thm:main}
For every $n\ge1$, we have
\[
  \inv(Q_n)=\left\lfloor\frac{n-1}{2}\right\rfloor .
\]
\end{theorem}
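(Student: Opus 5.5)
The equality splits into an upper bound and a lower bound; the lower bound carries essentially all the difficulty. For the upper bound I will use the matching construction mentioned after the conjecture \cite[Conjecture~8.6]{APSSW}. It uses $\lfloor (n-1)/2\rfloor$ inversions, each supported on a window of consecutive vertices, with the windows arranged so that every reversed pair $i(i+1)$ lies in an odd number of chosen sets and every nonconsecutive pair in an even number. I will state the family explicitly, check the parity of coverage pair by pair, and exhibit the resulting linear order to confirm transitivity. This is routine, though I may need to let the target order differ from the natural one near the ends of $[n]$.

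For the lower bound I will pass to linear algebra over $\FF_2$. Suppose $X_1,\dots,X_k$ is a decycling family. Give each vertex $i$ the membership vector $x_i\in\FF_2^k$, and let $G=(\langle x_i,x_j\rangle)_{i,j}$ be the Gram matrix, so $\rk G\le k$. An arc $ij$ is reversed exactly when $\langle x_i,x_j\rangle=1$. Let $S_Q$ be the adjacency matrix of $Q_n$ and $S$ that of the resulting transitive tournament, with linear order $\sigma$. Then off the diagonal we have $G=S_Q+S$, so
\[
  G=S_Q+S+D
\]
for some diagonal matrix $D$. The theorem therefore reduces to the claim that for every permutation $\sigma$ (with $S$ the adjacency matrix of the transitive tournament of order $\sigma$) and every diagonal $D$,
\[
  \rk(S_Q+S+D)\ \ge\ \left\lfloor\tfrac{n-1}{2}\right\rfloor .
\]
Off the diagonal, $S_Q+S$ is symmetric with entry $\one_{j=i+1}+\one_{\sigma \text{ inverts } ij}$ for $i<j$. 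So the claim says: the symmetric matrix given by ``path adjacency XOR inversion graph of $\sigma$'', with an arbitrary diagonal, has rank at least $\lfloor (n-1)/2\rfloor$.

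To prove the rank bound I plan to find an explicit square submatrix of size about $(n-1)/2$ that is nonsingular for every $\sigma$ and $D$. I would look for it among rows and columns indexed by alternating vertices, for instance odd rows against even columns. In that submatrix the path contributes a bidiagonal pattern, and I would try to show that the inversion-graph contribution cannot cancel it. The tool for this is that an inversion graph of a permutation is a permutation graph and hence comparability-structured; equivalently, $S+S^T=J+I$ with $S$ a permuted strictly triangular matrix.

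If no uniform submatrix works, I will switch to induction on $n$ with step $2$: show that from a rank-$r$ representation for $Q_{n+2}$ one can delete the two end vertices, or contract along the last reversed arc, and obtain a representation for $Q_n$ of rank at most $r-1$. That requires proving that the last two vertices always contribute at least one new dimension, independent of $\sigma$ and $D$. The main obstacle is this uniformity over all $\sigma$ and all diagonals. The small cases $n\le 8$, known to be correct, will serve as a sanity check on whichever invariant I choose. I expect a case analysis on where $\sigma$ places the vertices $n-1$ and $n$ relative to $n-2$ to be the delicate part.
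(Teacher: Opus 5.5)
\textbf{There is a genuine gap: the lower-bound rank inequality is never proved, and both routes you sketch for it fail.}

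Your reduction is correct and matches the paper's framework. With membership vectors $x_i\in\FF_2^k$, the Gram matrix $G$ has rank at most $k$, and off the diagonal it equals path adjacency plus the inversion graph of $\sigma$ (the paper's Lemma~\ref{lem:basic}). What is missing is a proof of $\rk(S_Q+S+D)\ge\lfloor (n-1)/2\rfloor$.

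\emph{The uniform-minor route cannot work.} No minor is nonsingular for every $\sigma$ and $D$. For $n=3$, the optimal families $\{\{1,2\}\}$, $\{\{2,3\}\}$, $\{\{1,3\}\}$ have Gram matrices supported on $\{1,2\}^2$, $\{2,3\}^2$, $\{1,3\}^2$, so no entry is nonzero in all three. Your specific choice of odd rows against even columns also breaks for every odd $n$. Take $\sigma=(3,2,1,4,5,\dots,n)$: the inversions of $12$ and $23$ cancel the path entries, so column $2$ of that submatrix vanishes and its rank drops below $(n-1)/2$.

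\emph{The induction route rests on a false claim.} Consider $Q_5$ with the optimal family $\{1,2\},\{3,4\}$, whose final order is $(1,3,2,5,4)$. Here $x_4=x_3$ and $x_5=0$, so the last two vertices add no new dimension, and $\rk G=\rk G[\{1,2,3\}]=2$. The drop to $\inv(Q_3)=1$ appears only after re-choosing the diagonal (equivalently, discarding the restricted set $\{3\}$, which reverses no arc). So the inductive step would have to control the minimum over $D$, which is the original difficulty.

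\textbf{The missing idea is to let the rows depend on $\sigma$.} The paper replaces $x_v$ by $y_v=x_v+x_{q(v)}$, where $q(v)$ is the predecessor of $v$ in $\sigma$ and a dummy $x_0=0$ precedes $s_1$.
\begin{itemize}
\item A label $t>h(v)=\max\{v,q(v)\}$ lies on the same side of both $v$ and $q(v)$ in $\sigma$. Hence the inversion-graph terms cancel, and $y_v\cdot x_t=\one_{\{t=h(v)+1\}}$ for all $t>h(v)$.
\item Choosing one $v$ for each distinct value $h(v)<n$ gives a unit lower-triangular matrix $C=YZ^{\mathsf T}$, so $k\ge|\calH(\sigma)|$.
\item Each label is the larger endpoint of at most two edges of the path $0,s_1,\dots,s_n$. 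Therefore $|\calH(\sigma)|\ge\lceil (n-2)/2\rceil=\lfloor (n-1)/2\rfloor$.
\end{itemize}

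\textbf{Your upper-bound description also needs correcting.} Covering every consecutive pair an odd number of times and every other pair an even number of times produces the natural order. By your own reduction that costs at least $n-1$ inversions, since rows $1,\dots,n-1$ against columns $2,\dots,n$ of (path adjacency $+\,D$) form a unit triangular block. The matching construction instead inverts $\{2,3\},\{4,5\},\dots,\{2m,2m+1\}$ and leaves the pairs $\{2r-1,2r\}$ reversed. Its final order is $(2,1,4,3,\dots)$, which differs from the natural order everywhere, not just near the ends.
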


\section{The upper bound}

We prove the upper bound by constructing an family of inversions.

\begin{theorem}\label{th-1}
For every $n\ge1$, we have
\[
  \inv(Q_n) \le \left\lfloor\frac{n-1}{2}\right\rfloor .
\]    
\end{theorem}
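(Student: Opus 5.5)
The plan is to use the natural matching construction: invert only $2$-element sets, each of which reverses a single arc, and choose them so that the result is transitive with respect to an explicitly prescribed linear order. Concretely, for $1\le k\le \lfloor (n-1)/2\rfloor$ set $X_k=\{2k,2k+1\}$; the condition $2k+1\le n$ is exactly $k\le (n-1)/2$, so there are $\lfloor (n-1)/2\rfloor$ such sets. These sets are pairwise disjoint, so no arc lies inside two of them. Hence applying the family reverses precisely the arcs on the pairs $\{2k,2k+1\}$ and leaves every other arc of $Q_n$ unchanged.

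The target is the linear order $\sigma$ obtained from $1<2<\cdots<n$ by swapping each pair $\{2k-1,2k\}$ with $2k\le n$, namely $2,1,4,3,6,5,\ldots$ (with $n$ left last if $n$ is odd). Let $T_\sigma$ be the transitive tournament in which every arc points forward in $\sigma$. Compared with the natural transitive tournament $T$, the tournament $T_\sigma$ reverses exactly the pairs $\{2k-1,2k\}$. This holds because any two elements in different swapped blocks keep their relative order under $\sigma$.

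Compare this with $Q_n$. It differs from $T$ exactly on the consecutive pairs $\{i,i+1\}$, $1\le i\le n-1$. Therefore $Q_n$ and $T_\sigma$ differ exactly on the symmetric difference of these two sets of pairs. The pairs $\{2k-1,2k\}$ are all consecutive, so this symmetric difference is the set of consecutive pairs $\{2k,2k+1\}$ with $2k+1\le n$, which is exactly $\{X_k\}$. So after applying the inversions of all $X_k$ to $Q_n$ we obtain $T_\sigma$. It is transitive, so the family is decycling and $\inv(Q_n)\le\lfloor (n-1)/2\rfloor$. For $n=1,2$ the family is empty, and $Q_n$ is already transitive in those cases.

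The only step needing care is the bookkeeping of which pairs differ between $Q_n$, $T$ and $T_\sigma$, including the parity endpoint cases $n$ even versus $n$ odd. As a sanity check I would verify $n=5$ directly: the order $2,1,4,3,5$ is consistent with $Q_5$ after inverting $\{2,3\}$ and $\{4,5\}$.
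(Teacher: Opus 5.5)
Your proposal is correct and follows essentially the same approach as the paper: you invert the same disjoint pairs $\{2k,2k+1\}$ and target the same order $(2,1,4,3,\ldots)$ with $n$ placed last when $n$ is odd. Your symmetric-difference bookkeeping is a compact repackaging of the paper's three-case check (same block, different blocks non-consecutive, different blocks consecutive).
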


\begin{proof}
The vertices of $Q_n$ are labelled by
$1,2,\ldots,n.$
We call $(1,2,\ldots,n)$
the natural order of the labels. 
Set $m=\lfloor\frac{n-1}{2}\rfloor, n \geq 3$
and invert the $m$ two-vertex sets
$\{2,3\},\{4,5\},\ldots,\{2m,2m+1\}.$
Since each inverted set contains only two vertices, the inversion of
$\{2t,2t+1\}$ reverses only the edge between $2t$ and $2t+1$ for $1 \leq t \leq m$.

We now specify the order that will be the transitive order $\tau$ of the tournament
after these inversions. This order is not the original natural order of the
labels. Let
\[
\tau= \begin{cases} (2,1,4,3,\ldots,2b,2b-1), & \text{if } n=2b,\\[4pt] (2,1,4,3,\ldots,2b,2b-1,2b+1), & \text{if } n=2b+1. \end{cases} 
\]
Then $m=b-1$ for $n=2b$ and $m=b$ for $n=2b+1$. Thus, in $\tau$, each pair of consecutive labels $\{2r-1,2r\}$ appears in the order $(2r,2r-1),$
where $1\le r\le b.$
These two-vertex blocks are arranged in increasing order of their labels. In
the odd case $n=2b+1$, the remaining vertex $2b+1$ is placed at the end.

Consider first two vertices in the same block $(2r,2r-1)$ for $1 \le r \le b$. Their labels are
consecutive, so in the original tournament $Q_n$ the edge is $2r\to 2r-1.$
The pair $\{2r-1,2r\}$ is not one of the sets that we invert. Hence this edge is
unchanged. 

Next, consider two vertices $u$ and $v$ lying in different blocks with $u<v$, and suppose
that their labels are not consecutive. Then $v\ge u+2,$
so the edge in $Q_n$ has the natural orientation $u\to v.$
This edge is not affected by any inversion, because every inversion involves
only one of the pairs $\{2t,2t+1\}$ for $1 \leq t \leq m$.

It remains to consider consecutive labels belonging to different blocks.
These pairs are exactly $\{2r, 2r+1\},$ for $ 1\le r\le m.$
In the original tournament $Q_n$, their edge is $2r+1\to 2r,$
because consecutive edges are reversed relative to the natural order.
However, $\{2r,2r+1\}$ is one of the inverted sets, so this edge is changed to
$2r\to 2r+1$ in $\tau$.

These cases cover every pair of vertices. Hence, after the $m$ inversions, all
edges point forward in the order $\tau$. Therefore the resulting tournament is
transitive, and $\inv (Q_n)\le m
=\left\lfloor\frac{n-1}{2}\right\rfloor.$

For $n=1$ and $n=2$, we have $m=0$. In these two cases, $Q_n$ is already
transitive, so no inversion is required.    
\end{proof}

\section{The lower bound}

\subsection{Parity vector for an arbitrary inversion family}

All vector spaces, inner products, and ranks in the proof are over $\FF_2=\{0,1\}$ unless explicitly stated.
We now prove the lower bound. Fix an arbitrary final transitive order
\[
  \sigma=(s_1,s_2,\ldots,s_n)
\]
of the vertices. We shall prove a lower bound for inversion families whose final transitive order is
this prescribed order, and only then minimize over $\sigma$.

Suppose that $k$ inversions $X_1,\ldots,X_k$ transform $Q_n$ into the transitive tournament with order
$\sigma$. For each vertex $i\in[n]$, define its membership vector
\[
  x_i=(x_i(1),\ldots,x_i(k))\in\FF_2^k,
  \qquad
  x_i(r)=1 \Longleftrightarrow i\in X_r.
\]
For distinct vertices $i,j$, the edge $ij$ is flipped by the $r$th inversion exactly when both endpoints
belong to $X_r$, which is exactly the condition $x_i(r)x_j(r)=1$.  Hence the total parity with which
$ij$ is flipped is
\begin{equation}\label{eq:flip-parity}
  x_i\cdot x_j
  =\sum_{r=1}^k x_i(r)x_j(r)
  \quad\text{(mod} \ 2).
\end{equation}
Only this parity matters, because flipping the same edge twice restores its direction.  Equivalently,
inversions commute, so the order in which an inversion family is applied is irrelevant.

For a vertex $i$, let $\operatorname{pos}_\sigma(i)$ denote the position of $i$ in the order
$\sigma$. For distinct $i,j$, define $\eps_\sigma(i,j)=1$ if $i$ and $j$ have opposite relative order
in the natural order and in $\sigma$, and define $\eps_\sigma(i,j)=0$ otherwise.  Equivalently, for
$i<j$,
\[
  \eps_\sigma(i,j)=1
  \quad\Longleftrightarrow\quad
  \operatorname{pos}_\sigma(i)>\operatorname{pos}_\sigma(j),
\]
and we extend symmetrically by $\eps_\sigma(i,j)=\eps_\sigma(j,i)$.

\begin{lemma}\label{lem:basic}
For all distinct $i,j\in[n]$, we have
\begin{equation}\label{eq:basic}
  x_i\cdot x_j
  =\eps_\sigma(i,j)+\one_{\{|i-j|=1\}}
  \quad\text{(mod}\ 2).
\end{equation}
\end{lemma}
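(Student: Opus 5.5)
The plan is to compare, for a fixed pair $i<j$, three orientations of the edge $ij$: the orientation in $Q_n$, the orientation after applying $X_1,\ldots,X_k$, and the orientation prescribed by the natural order. By \eqref{eq:flip-parity}, the edge $ij$ is reversed by the whole family exactly when $x_i\cdot x_j=1$. It therefore suffices to show that the parity of ``$Q_n$ orientation differs from final orientation'' equals $\eps_\sigma(i,j)+\one_{\{|i-j|=1\}}$. By the symmetry of both sides in $i,j$, we may assume $i<j$.

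First, we record how $Q_n$ differs from the natural transitive order. By the definition of $Q_n$, the edge $ij$ points backward relative to the natural order, namely $j\to i$, if and only if $j=i+1$. So the indicator that $Q_n$ disagrees with the natural order on $ij$ is $\one_{\{|i-j|=1\}}$.

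Second, we record how the final tournament differs from the natural order. The final tournament is transitive with order $\sigma$, so its edge is $i\to j$ exactly when $\operatorname{pos}_\sigma(i)<\operatorname{pos}_\sigma(j)$. Hence it disagrees with the natural orientation $i\to j$ exactly when $\eps_\sigma(i,j)=1$.

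Combining the two: ``disagree'' is an XOR relation on the two possible orientations of an edge. The indicator that the $Q_n$-orientation differs from the final orientation is therefore the sum mod $2$ of the two indicators above. That indicator equals $x_i\cdot x_j$, which gives \eqref{eq:basic}. There is no real obstacle here. The only care needed is to phrase the three-way comparison as a sum of disagreement indicators in $\FF_2$, and to note that commutativity of inversions (already observed) lets us use only the parity.
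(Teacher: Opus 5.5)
Your proof is correct and is essentially the paper's argument. The paper verifies the identity by an explicit four-case check (whether $j=i+1$ or $j\ge i+2$, combined with the relative order of $i,j$ in $\sigma$), and these four cases are exactly the case table of your sum modulo $2$ of the two disagreement indicators $\one_{\{|i-j|=1\}}$ and $\eps_\sigma(i,j)$; your formulation simply avoids writing the cases out.
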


\begin{proof} 
Fix distinct vertices $i,j\in[n]$ of $Q_n$. Since both sides of
\eqref{eq:basic} are symmetric in $i$ and $j$, we may assume that $i<j.$
We consider the following two cases.

\begin{case}
$j\ge i+2$.
\end{case}

Then $i$ and $j$ are not consecutive, so their edge in $Q_n$ has the natural
orientation $i\to j.$
If $i$ appears before $j$ in $\sigma$, then the final orientation is also
$i\to j$. Hence the edge is reversed an even of times, and
\eqref{eq:flip-parity} gives $x_i\cdot x_j=0.$
Moreover, $\eps_\sigma(i,j)=0$ and 
$\one_{\{|i-j|=1\}}=0.$
Therefore, $x_i\cdot x_j
=\eps_\sigma(i,j)+\one_{\{|i-j|=1\}}.$

If $j$ appears before $i$ in $\sigma$, then the final orientation is
$j\to i$. Hence the edge is reversed an odd of times, and
\eqref{eq:flip-parity} gives $x_i\cdot x_j=1.$
In this case, $\eps_\sigma(i,j)=1$ and
$\one_{\{|i-j|=1\}}=0,$
so again $x_i\cdot x_j
=\eps_\sigma(i,j)+\one_{\{|i-j|=1\}}.$

\begin{case}
$j=i+1$.
\end{case}

Now $i$ and $j$ are consecutive, so their edge in $Q_n$ is $j\to i.$
If $i$ appears before $j$ in $\sigma$, then the final orientation is
$i\to j$. Thus the edge is reversed an odd of times, and
\eqref{eq:flip-parity} gives $x_i\cdot x_j=1.$
Furthermore, $\eps_\sigma(i,j)=0$ and
$\one_{\{|i-j|=1\}}=1,$
and hence
$x_i\cdot x_j
=\eps_\sigma(i,j)+\one_{\{|i-j|=1\}}.$

If $j$ appears before $i$ in $\sigma$, then the final orientation remains
$j\to i$. Thus the edge is reversed an even of times, and
\eqref{eq:flip-parity} gives $x_i\cdot x_j=0.$
Here,
$\eps_\sigma(i,j)=1$ and
$\one_{\{|i-j|=1\}}=1.$
Since the two terms on the right-hand side are both equal to $1$, it follows that their sum
vanishes modulo $2$. Hence
$x_i\cdot x_j
\equiv \eps_\sigma(i,j)+\one_{\{|i-j|=1\}}
\pmod{2}.$

Therefore, the equality holds in all cases, and \eqref{eq:basic} follows.
\end{proof}

The rest of the proof extracts many independent consequences of \eqref{eq:basic}.  Introduce an
auxiliary symbol $0$ by setting
$s_0=0, x_0=\{0,\ldots,0\}.$
The symbol $0$ is not a vertex of $Q_n$; it is used only to give the first element $s_1$ a formal
predecessor. In particular, no expression $\eps_\sigma(0,t)$ will be used.

For $v=s_r$, define $q(v)=s_{r-1}$
the predecessor of $v$ in the augmented sequence $0,s_1,\ldots,s_n$.  Thus $q(s_1)=0$ and
$q(v)\in[n]$ for all other vertices.  Also set
\[
  y_v=x_v+x_{q(v)},
  \qquad
  h(v)=\max\{v,q(v)\}.
\]
When $h(v)<n$, the vector $x_{h(v)+1}$ is one of the genuine vertex vectors, so all inner products
below are well-defined.

\subsection{The predecessor-difference pivot and fixed-order lower bound}

\begin{lemma}\label{lem:pivot}
Let $v\in[n]$. If $h(v)<n$ and $t >h(v)$, then
\begin{equation}\label{eq:pivot-one}
y_v\cdot x_t=
\begin{cases}
1, & \text{if } t=h(v)+1,\\[2mm]
0, & \text{if } t>h(v)+1.
\end{cases}
\end{equation}
\end{lemma}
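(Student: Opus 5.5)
We expand $y_v\cdot x_t = x_v\cdot x_t + x_{q(v)}\cdot x_t$ and evaluate each term with Lemma~\ref{lem:basic}. Since $t>h(v)=\max\{v,q(v)\}$, the vertex $t$ is distinct from both $v$ and $q(v)$, so the lemma applies to the pair $(v,t)$. It also applies to $(q(v),t)$ whenever $q(v)\ne 0$; if $q(v)=0$ then $x_0=0$ and that term vanishes.

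\textbf{Case $q(v)\in[n]$.} Lemma~\ref{lem:basic} gives
\[
y_v\cdot x_t=\eps_\sigma(v,t)+\eps_\sigma(q(v),t)+\one_{\{|v-t|=1\}}+\one_{\{|q(v)-t|=1\}} \pmod 2 .
\]
We treat the $\eps$ terms first. Because $t$ exceeds both $v$ and $q(v)$ in the natural order, $\eps_\sigma(v,t)=1$ exactly when $t$ precedes $v$ in $\sigma$. Likewise $\eps_\sigma(q(v),t)=1$ exactly when $t$ precedes $q(v)$ in $\sigma$. Now $q(v)$ and $v$ occupy consecutive positions $r-1,r$ of $\sigma$, and $t$ is neither of them. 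Hence $t$ lies either before both or after both, so the two indicators agree and their sum is $0$ mod $2$.

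We then treat the adjacency terms. Since $t>v$ and $t>q(v)$, we have $|v-t|=1$ iff $t=v+1$, and $|q(v)-t|=1$ iff $t=q(v)+1$. As $v\ne q(v)$, at most one of these holds, and one holds only when $t=\max\{v,q(v)\}+1=h(v)+1$. Indeed $t=\min+1\le \max$ contradicts $t>h(v)$. So the adjacency sum equals $\one_{\{t=h(v)+1\}}$, which is the claim.

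\textbf{Case $q(v)=0$, i.e. $v=s_1$.} Here $h(v)=v$ and $y_v\cdot x_t=x_v\cdot x_t=\eps_\sigma(v,t)+\one_{\{|v-t|=1\}}$. Because $v$ is first in $\sigma$ and $v<t$, we have $\eps_\sigma(v,t)=0$. The adjacency term is $\one_{\{t=v+1\}}=\one_{\{t=h(v)+1\}}$, as required.

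The only delicate point is the cancellation of the $\eps$ terms, which uses that $q(v)$ and $v$ are adjacent in $\sigma$ and that $t$ is larger than both in the natural order. The rest is bookkeeping.
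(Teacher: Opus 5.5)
Your proof is correct and follows the paper's argument essentially step for step. Like the paper, you split on whether $q(v)=0$, expand $y_v\cdot x_t$ with Lemma~\ref{lem:basic}, cancel the two $\eps_\sigma$ terms because $t$ cannot lie between the $\sigma$-consecutive vertices $q(v)$ and $v$, and then read off the adjacency indicators, which give exactly $\one_{\{t=h(v)+1\}}$.
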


\begin{proof}
Write $q=q(v)$ and $h=h(v)=\max\{q,v\}.$
We consider the two
possible values of $q$.

\setcounter{case}{0}
\begin{case}
$q\neq 0$.
\end{case}

By the definition of $q(v)$, the vertex $q$ occurs immediately before $v$ in
the order $\sigma$. Let $t>h$. Since $h=\max\{q,v\},$
we have $q<t$ and $v<t$ in the natural label order. Because $q$ and $v$ are consecutive in $\sigma$, the vertex $t$ cannot occur
between them. Hence, in $\sigma$, the vertex $t$ appears either before both
$q$ and $v$, or after both of them. Therefore the pairs $(q,t)$ and $(v,t)$
have the same relative-order status, and thus
\begin{equation}\label{eq:eps-cancel}
\eps_\sigma(q,t)=\eps_\sigma(v,t).
\end{equation}

Using $y_v=x_v+x_q$ and Lemma~\ref{lem:basic}, we obtain
\begin{align}
y_v\cdot x_t
&=(x_v+x_q)\cdot x_t \notag\\
&=x_v\cdot x_t+x_q\cdot x_t \notag\\
&=\eps_\sigma(v,t)+\one_{\{|v-t|=1\}}
  +\eps_\sigma(q,t)+\one_{\{|q-t|=1\}} \notag\\
&\equiv \one_{\{|v-t|=1\}}
  +\one_{\{|q-t|=1\}}
  \pmod{2}.
\label{eq:tail-formula}
\end{align}

Now take $t=h+1.$
Exactly one of $q$ and $v$ is equal to $h$, while the other is at most $h-1$.
Hence exactly one of the two pairs
$\{v,h+1\},\{q,h+1\}$
consists of consecutive labels. Therefore, exactly one indicator in
\eqref{eq:tail-formula} is equal to $1$, and
$y_v\cdot x_{h+1}=1.$

If instead $t>h+1$, then $q\le h$ and $v\le h$, and hence $t-q>1$ and $t-v>1.$
Therefore, $\one_{\{|q-t|=1\}}=0$ and 
$\one_{\{|v-t|=1\}}=0.$
It follows from \eqref{eq:tail-formula} that
$y_v\cdot x_t=0.$

\begin{case}
$q=0$.
\end{case}
Then $v=s_1$, so $v$ is the first vertex in the order $\sigma$. Moreover, $x_q=x_0, y_v=x_v+x_0=x_v$ and $h(v)=v.$ Since $t>h(v)=v$ in the natural label order and $v$ also appears before
$t$ in $\sigma$, the relative order of $v$ and $t$ is unchanged. Therefore, $\eps_\sigma(v,t)=0.$
Applying Lemma~\ref{lem:basic}, we obtain
$y_v\cdot x_t
=x_v\cdot x_t
=\one_{\{|v-t|=1\}}.$

If $t=v+1$, then $\one_{\{|v-t|=1\}}=1,$ and hence $y_v\cdot x_t=1.$
If $t>v+1$, then $v$ and $t$ are not consecutive, so $\one_{\{|v-t|=1\}}=0,$ and therefore, $y_v\cdot x_t=0.$
Thus \eqref{eq:pivot-one} also holds when $q=0$.
\end{proof}

For the fixed order $\sigma$, define the set of forced pivot labels
\[
  \calH(\sigma)=\{h(v):v\in[n],\ h(v)<n\}.
\]
Let $\inv_\sigma(Q_n)$ denote the minimum number of inversions that transform $Q_n$ into the transitive
tournament with order $\sigma$.

\begin{proposition}\label{prop:fixed-order}
For every order $\sigma$ of $[n]$, we have
$\inv_\sigma(Q_n)\ge |\calH(\sigma)|.$
\end{proposition}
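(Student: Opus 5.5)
We prove that the vectors $y_v$ associated with distinct pivot labels are linearly independent in $\FF_2^k$. Since $k$ is the number of inversions in an arbitrary family realizing $\sigma$, this gives $k\ge|\calH(\sigma)|$, and taking the minimum over such families yields the proposition.

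For each label $h\in\calH(\sigma)$, choose one vertex $v_h\in[n]$ with $h(v_h)=h<n$, and write $y^{(h)}=y_{v_h}$. Order the elements of $\calH(\sigma)$ as $h_1<h_2<\cdots<h_m$, where $m=|\calH(\sigma)|$. Lemma~\ref{lem:pivot} then gives, for all $a,b$,
\[
  y^{(h_a)}\cdot x_{h_b+1}=
  \begin{cases}
    1, & a=b,\\
    0, & h_a<h_b, \text{ i.e. } a<b.
  \end{cases}
\]
For $a=b$ we take $t=h_a+1$. For $a<b$ we have $t=h_b+1>h_a+1$, so $t$ is strictly larger than $h(v_{h_a})+1$. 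Both vectors $x_{h_b+1}$ are genuine vertex vectors because $h_b<n$.

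Consider the $m\times m$ matrix $M$ over $\FF_2$ with entries $M_{ab}=y^{(h_a)}\cdot x_{h_b+1}$. By the display, $M$ has ones on the diagonal and zeros strictly above it, so it is lower unitriangular and hence invertible over $\FF_2$. Now suppose $\sum_{a\in S} y^{(h_a)}=0$ for some nonempty $S$, and let $a_0=\max S$. Taking the inner product with $x_{h_{a_0}+1}$ gives
\[
  \sum_{a\in S} M_{a a_0}=M_{a_0a_0}=1,
\]
because $M_{aa_0}=0$ for all $a<a_0$. This contradicts the assumed relation. Hence the $y^{(h_a)}$ are linearly independent in $\FF_2^k$, and $k\ge m$.

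There is no real obstacle here; Lemma~\ref{lem:pivot} already supplies the triangular structure. The only points to check are the following.
\begin{itemize}
  \item Distinct vertices may share the same pivot label $h$. This is handled by choosing one representative $v_h$ per label, which is why the bound is $|\calH(\sigma)|$ rather than a count of vertices.
  \item The case $q(v)=0$, where $y_v=x_v$, is already covered by Lemma~\ref{lem:pivot}.
  \item Each test vector $x_{h_b+1}$ is a genuine vertex vector, since $h_b<n$.
\end{itemize}
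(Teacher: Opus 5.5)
Your proof is correct and follows the paper's route: you choose one representative per pivot label and use Lemma~\ref{lem:pivot} to get the lower unitriangular matrix $\bigl(y_{v_{h_a}}\cdot x_{h_b+1}\bigr)$, from which $k\ge|\calH(\sigma)|$ follows. The only difference is cosmetic: the paper bounds $\rk(YZ^{\mathsf T})\le k$, while you argue directly that the rows $y_{v_{h_a}}$ are linearly independent in $\FF_2^k$.
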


\begin{proof}
Suppose that $k$ inversions transform $Q_n$ into the transitive tournament
with order $\sigma$. We prove that $k\ge |\calH(\sigma)|.$ Write the distinct elements of $\calH(\sigma)$ in increasing order as
$\calH(\sigma)=\{h_1<h_2<\cdots<h_a\}.$
Thus $a=|\calH(\sigma)|.$
If $a=0$, then the desired inequality is immediate. Assume therefore that
$a\ge 1$. For each $i\in\{1,\dots,a\}$, we select a vertex $v_i\in[n]$ such that $h(v_i)=h_i$. The existence of such a vertex follows directly from the definition of $\mathcal{H}(\sigma)$.

Now define the $a\times a$ matrix $C$ by
\begin{equation}\label{eq:C-def}
C_{ij}=y_{v_i}\cdot x_{h_j+1}.
\end{equation}
We first determine the triangular structure of $C$. Since $h(v_i)=h_i$, Lemma~\ref{lem:pivot} applied with $t=h(v_i)+1=h_i+1$
gives $C_{ii}
=y_{v_i}\cdot x_{h_i+1}
=1$ by \eqref{eq:C-def}.
Next, let $j>i$. Since $h_1<h_2<\cdots<h_a,$
we have $h_j+1>h_i+1.$
Applying the second part of Lemma~\ref{lem:pivot} to the vertex $v_i$, with
$t=h_j+1$, yields $C_{ij}
=y_{v_i}\cdot x_{h_j+1}
=0.$
Therefore, every entry above the main diagonal is zero. Thus $C$ is lower
triangular, with all diagonal entries equal to $1$.

Consequently, $C$ is nonsingular over $\FF_2$, and hence
\begin{equation}\label{eq:C-full-rank}
\rk_{\FF_2} C=a.
\end{equation}

We now obtain an upper bound on the rank of $C$. Let $Y$ be the $a\times k$
matrix whose $i$th row is $y_{v_i}$, and let $Z$ be the $a\times k$ matrix
whose $j$th row is $x_{h_j+1}$. Then the $(i,j)$-entry of $YZ^{\mathsf T}$ is $(YZ^{\mathsf T})_{ij}
=y_{v_i}\cdot x_{h_j+1}
=C_{ij}.$
Hence $C=YZ^{\mathsf T}.$
Since both $Y$ and $Z$ have $k$ columns, it follows that $\rk Y\le k$ and $\rk Z\le k.$
Therefore,
\[
\rk C
=\rk(YZ^{\mathsf T})
\le \min\{\rk Y,\rk Z\}
\le k.
\]
Combining this observation with \eqref{eq:C-full-rank}, we conclude that $a \le k.$
As $a = |\calH(\sigma)|$, this implies that any inversion family inducing the order $\sigma$ must contain at least $|\calH(\sigma)|$ inversions. We thus establish the lower bound $\inv_\sigma(Q_n) \ge |\calH(\sigma)|.$
\end{proof}

\begin{lemma}\label{lem:path-count}
For every order $\sigma=(s_1,\ldots,s_n)$ of $[n]$, we have $|\calH(\sigma)|
\ge \left\lfloor\frac{n-1}{2}\right\rfloor.$
\end{lemma}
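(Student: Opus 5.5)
The plan is a counting argument on the augmented sequence $0,s_1,\ldots,s_n$. This sequence has exactly $n$ adjacent pairs $\{s_{r-1},s_r\}$ for $1\le r\le n$. By definition, $h(s_r)=\max\{s_{r-1},s_r\}$ is the larger element of the $r$th pair. So the values $h(v)$, $v\in[n]$, are the maxima of these $n$ adjacent pairs, counted with multiplicity. Every pair contains at least one genuine vertex, so each such maximum is a label in $[n]$; the auxiliary symbol $0$ never occurs as a maximum.

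The key step is to bound how often a single label can occur as a maximum. If $h(s_r)=m$, then $m$ must be one of $s_{r-1},s_r$. The label $m$ occupies exactly one position in the sequence, so it lies in at most two adjacent pairs: the one with its predecessor and the one with its successor. Hence each label $m\in[n]$ equals $h(v)$ for at most two vertices $v$. In particular, at most two of the $n$ pairs have maximum $n$. At least $n-2$ of the values $h(v)$ are therefore $<n$, and each lies in $\calH(\sigma)$.

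Since each element of $\calH(\sigma)$ is hit by at most two of these $n-2$ (or more) pairs, we get
\[
|\calH(\sigma)|\ \ge\ \left\lceil \frac{n-2}{2}\right\rceil .
\]
It remains to check that $\lceil (n-2)/2\rceil=\lfloor (n-1)/2\rfloor$ by parity. For $n=2b$ both sides equal $b-1$, and for $n=2b+1$ both equal $b$. For $n=1$ the bound is $0$ and is trivial; the counting argument also covers $n\le 2$, since the right-hand side is then $0$.

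I do not expect a real obstacle. The only point needing care is the multiplicity claim: the value $h(v)$ is attained by an element of the pair itself, so one occurrence of a label in the sequence accounts for at most two pair maxima. This holds for the first pair $\{0,s_1\}$ as well, where $h(s_1)=s_1$. Combined with Proposition~\ref{prop:fixed-order} and minimized over $\sigma$, this gives the lower bound in Theorem~\ref{thm:main}.
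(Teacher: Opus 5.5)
Your proposal is correct and follows the paper's proof essentially step for step. Both arguments read $h(s_r)$ as the larger endpoint of the $r$th edge of the augmented path $0,s_1,\ldots,s_n$. Both note that each label is the larger endpoint of at most two such edges, discard the at most two edges whose larger endpoint is $n$, and conclude $n-2\le 2|\calH(\sigma)|$, i.e.\ $|\calH(\sigma)|\ge\lceil (n-2)/2\rceil=\lfloor (n-1)/2\rfloor$.
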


\begin{proof}
For $n=1$, we have $\calH(\sigma)=\varnothing$, and hence both sides are
equal to $0$. Assume that $n\ge 2$.
Introduce the auxiliary symbol $s_0=0$ and consider the augmented path $s_0,s_1,\ldots,s_n.$
Its $n$ edges are $e_r=\{s_{r-1},s_r\}$ for $
r=1,\ldots,n.$
By the definition of $h(v)$, the larger endpoint of $e_r$ is $\max\{s_{r-1},s_r\}=h(s_r).$

Fix $j\in[n]$. Any path edge whose larger endpoint is $j$ must be incident
with $j$. Since every vertex of a path has degree at most $2$, the label $j$
can occur as the larger endpoint of at most two path edges. In particular, at most two edges have larger endpoint equal to $n$.
Therefore, at least $n-2$ of the $n$ edges have larger endpoint strictly smaller than $n$.

Let $E_{<n}=\bigl\{\{s_{r-1},s_r\}: h(s_r)<n\bigr\}.$
Since the label $n$ can be the larger endpoint of at most two path edges, it follows that $|E_{<n}|\ge n-2.$
Moreover, for each $h\in\calH(\sigma)$, at most two edges in $E_{<n}$ have
larger endpoint $h$. Therefore, 
$$
|E_{<n}|
\le \sum_{h\in\calH(\sigma)}2
=2|\calH(\sigma)|.
$$
Combining the two inequalities gives
$n-2\le 2|\calH(\sigma)|.$
Hence
\[
|\calH(\sigma)|
\ge
\left\lceil\frac{n-2}{2}\right\rceil
=
\left\lfloor\frac{n-1}{2}\right\rfloor.
\]

\end{proof}

\begin{proof}[Proof of Theorem \ref{thm:main}]
It follows from Proposition \ref{prop:fixed-order} and Lemma \ref{lem:path-count} that
$\inv(Q_n) \geq \left\lfloor\frac{n-1}{2}\right\rfloor.$
Combined with Theorem \ref{th-1}, this yields the exact equality $\inv(Q_n) = \left\lfloor\frac{n-1}{2}\right\rfloor.$
\end{proof}

\begin{remark}
The proof is non-computational. It does not enumerate orders, search for inversion families, or invoke a
rank-minimization theorem. The only algebraic device is the membership-vector encoding of an arbitrary
family of inversion sets over $\FF_2$, justified by the parity observation that inversions commute.  The
auxiliary symbol $0$ is used solely to define the predecessor of the first vertex in the final order; it is
never treated as a vertex of $Q_n$, and the case $q(v)=0$ is handled separately in Lemma~\ref{lem:pivot}.
Finally, the lower bound is not a covering argument: one inversion may flip many edges.  The obstruction is
linear independence. Different labels in $\calH(\sigma)$ yield different pivot columns, and the resulting
inner-product matrix is triangular with full rank.
\end{remark}

\section{Acknowledgement}

This work is supported by the National Science Foundation of China
(Nos. 12471329 and 12061059).

\end{document}